\newtheorem{theorem}{Theorem}
\newtheorem{corollary}[theorem]{Corollary}
\newtheorem{proposition}[theorem]{Proposition}
\newtheorem*{theorem*}{Theorem}
\theoremstyle{definition}
\newtheorem{remark}{Remark}
\newcommand{\Z}{\mathbb{Z}}
\newcommand{\Q}{\mathbb{Q}}
\newcommand{\Gal}{\textrm{Gal}}
\newcommand{\oo}{\mathcal{O}}
\newcommand{\GL}{\textrm{GL}}
\newcommand{\tors}{\textrm{tors}}
\newcommand{\cI}{\mathcal{I}}
\newcommand{\cF}{\mathcal{F}}
\newcommand{\cE}{\mathcal{E}}
\newcommand{\cQ}{\mathcal{Q}}
\date{}
\subjclass[2010]{11G05}
\author{Tyler Genao}
\thanks{Email: \texttt{tylergenao@uga.edu}. 
This material is based upon work supported by the National Science Foundation Graduate Research Fellowship under Grant No. 1842396. Partial support was also provided by the Research and Training Group grant DMS-1344994 funded by the National Science Foundation.}
\title[Typically bounding torsion isogenous to rational $j$-invariant]{Typically Bounding Torsion on Elliptic Curves Isogenous to Rational $j$-Invariant}
\begin{document}
\begin{abstract}
We prove that the family $\cI_{F_0}$ of elliptic curves over number fields that are geometrically isogenous to an elliptic curve with $F_0$-rational $j$-invariant is typically bounded in torsion. Under an additional uniformity assumption, we also prove that the family $\cI_{d_0}$ of elliptic curves over number fields that are geometrically isogenous to an elliptic curve with degree $d_0$ $j$-invariant is typically bounded in torsion.
\end{abstract}
\maketitle
\section{Introduction}
By Merel's strong uniform boundedness theorem \cite[Corollaire]{Mer96}, for each integer $d\in\Z^+$ there exists a bound $B(d)$ on the size of torsion subgroups of elliptic curves over degree $d$ number fields. Whereas it is an open problem to determine sharp upper bounds for $d>3$, recent progress has been made towards understanding the asymptotic behavior of $B(d)$ when restricted to special families of elliptic curves.

Let us define a \textit{family} $\cF$ to be a collection of pairs $E_{/F}$ where $F$ is a number field and $E$ is an elliptic curve defined over $F$. 
Let us also recall that any subset $S\subseteq \Z^+$ has a well-defined upper asymptotic density
\[
\overline{\delta}(S):=\limsup_{x\rightarrow \infty} \frac{\#(S\cap [1,x])}{x}.
\]
Then following Clark, Milosevic and Pollack \cite{CMP18}, we say that a family $\cF$ is \textit{typically bounded in torsion} if for all $\epsilon>0$ there exists a constant $B(\epsilon)>0$ such that the set of ``bad degrees,"
\[
\mathcal{S}(\cF, B(\epsilon)):=\lbrace d\in\Z^+: \exists E_{/F}\in \cF~\textrm{with }[F:\Q]=d,\#E(F)[\tors]\geq B(\epsilon)\rbrace,
\]
has upper density at most $\epsilon$. Stated differently, $\cF$ is typically bounded in torsion if the torsion subgroups $E(F)[\tors]$ of elliptic curves $E_{/F}\in \cF$ can be absolutely bounded after removing from $\cF$ the elliptic curves $E_{/F}$ whose number field degrees $[F:\Q]$ lie in a subset of $\Z^+$ of arbitrarily small upper density.
For example, Bourdon, Clark and Pollack have shown that the family of all CM elliptic curves is typically bounded in torsion \cite[Theorem 1.1.i]{BCP17}.

The family of all elliptic curves over number fields is not typically bounded in torsion \cite[Theorem 1.7]{CMP18}. But for any number field $F_0$, the family of elliptic curves with rational $j$-invariant,
\[
\cE_{F_0}:=\lbrace E_{/F}:j(E)\in F_0\rbrace,
\]
is typically bounded in torsion: this was first proven conditional on GRH and with a technical hypothesis on $F_0$ \cite[Theorem 1.8]{CMP18}, and then unconditionally with no such hypothesis \cite[Theorem 1]{Gen22}. 

We will generalize this result. Let us define a superfamily $\cI_{F_0}$ of $\cE_{F_0}$,
\[
\cI_{F_0}:=\lbrace E_{/F}:E~\textrm{is geometrically isogenous to some }E'~\textrm{with }j(E')\in F_0\rbrace.
\]
Our main result is  the following.
\begin{theorem}\label{I_F is TBT}
For each number field $F_0$, the family $\cI_{F_0}$ is typically bounded in torsion.
\end{theorem}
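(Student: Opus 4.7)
The plan is to reduce the theorem to the typical boundedness of $\cE_{F_0}$ from \cite[Theorem 1]{Gen22}, by transferring torsion along a carefully chosen geometric isogeny. First I would handle the CM case separately: since having CM is preserved under geometric isogeny, the CM curves in $\cI_{F_0}$ are contained in the family of all CM elliptic curves, which is typically bounded in torsion by Bourdon--Clark--Pollack \cite[Theorem 1.1.i]{BCP17}. We may therefore restrict to non-CM $E/F \in \cI_{F_0}$, for which $j(E), j(E') \notin \{0, 1728\}$ automatically.

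For such an $E/F$ with $[F:\Q] = d$, I would fix a cyclic geometric isogeny $\phi : E \to E'$ of minimal degree $N$ with $j(E') \in F_0$, and pick any model $\tilde{E}/F_0$ with $j(\tilde{E}) = j(E')$, so that $\tilde{E} \in \cE_{F_0}$. Over the compositum $L := F \cdot F_0$ (of degree at most $d \cdot [F_0:\Q]$ over $\Q$) both $E$ and $\tilde{E}$ are defined. The kernel $C = \ker \phi$ is one of $\psi(N) := N \prod_{p \mid N}(1 + 1/p)$ cyclic order-$N$ subgroups of $E[N]$, so its Galois orbit gives a field of definition $K/L$ of degree at most $\psi(N)$; an at-most-quadratic further extension $K'/K$ identifies $(E/C)_{K'}$ with $\tilde{E}_{K'}$ by trivializing the $\mu_2$-twist. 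This yields a $K'$-rational isogeny $\phi' : E_{K'} \to \tilde{E}_{K'}$ of degree $N$, and in turn the torsion-transfer bound
\[
\lvert E(F)[\tors] \rvert \;\leq\; \lvert E(K')[\tors] \rvert \;\leq\; N \cdot \lvert \tilde{E}(K')[\tors] \rvert,
\]
with $\tilde{E}/K' \in \cE_{F_0}$ and $[K':\Q] \leq 2\psi(N) \cdot d \cdot [F_0:\Q]$. Applying the typical boundedness of $\cE_{F_0}$ to $\tilde{E}/K'$ then controls $\lvert \tilde{E}(K')[\tors] \rvert$ outside a small-density subset of degrees $[K':\Q]$.

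The hardest part will be that this argument needs a bound on the minimal isogeny degree $N$ which is polynomial (or at least sufficiently tame) in $d$, so that the small-density subset of $[K':\Q]$-values pulls back to a small-density subset of $d$. A fully uniform bound on cyclic isogeny degrees over degree-$d$ number fields is not available in general, but the $F_0$-constraint forces the admissible pairs $(j(E), j(E'))$ to lie above the $F_0$-rational locus of $X(1)$ under the second projection $j_2 : X_0(N) \to X(1)$. For $N$ large, where $X_0(N)$ has high gonality, such points of number-field degree polynomial in $d$ must be isolated or sporadic, and I expect to bound their contribution using Faltings-type finiteness and quantitative results on sporadic points on modular curves; the small-$N$ regime can be handled directly. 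Combined with the torsion-transfer inequality and the typical boundedness of $\cE_{F_0}$, this should yield the theorem.
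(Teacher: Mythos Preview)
Your reduction to $\cE_{F_0}$ via torsion transfer along a minimal isogeny has a genuine gap: the transfer inequality
\[
\lvert E(F)[\tors]\rvert \le N\cdot \lvert \tilde{E}(K')[\tors]\rvert
\]
carries the factor $N$, so to conclude an \emph{absolute} bound on torsion (which is what typical boundedness demands) you would need $N$ absolutely bounded for the relevant $E/F$, not merely polynomial in $d$. But $N$ is not absolutely bounded, and in fact a polynomial bound is best possible: for any non-CM $E'/F_0$ and any large prime $\ell$ with surjective mod-$\ell$ image, a generic cyclic $\ell$-quotient $E=E'/C_\ell$ lies in $\cI_{F_0}$ over a field of degree $(\ell+1)[F_0:\Q]$, and one checks that the minimal cyclic isogeny from $E$ to any curve with $F_0$-rational $j$-invariant has degree exactly $\ell$. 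Base-changing, such curves appear in every degree $d$ divisible by $(\ell+1)[F_0:\Q]$. Thus even after your density bookkeeping, the right-hand side is at best $d^c\cdot B(\epsilon)$, which is not a uniform constant. The gonality/Faltings/sporadic-points sketch does not close this gap: at most it would produce a polynomial bound on $N$, which, as just argued, is not enough.

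The paper circumvents the isogeny-degree issue entirely by working prime by prime through the \textbf{P1}/\textbf{P2} framework of \cite{CMP18,CP18}. For \textbf{P1} it never bounds $\deg\phi$: given a point of order $\ell^n$ on $E$, either its image under $\phi$ has large $\ell$-power order, or $v_\ell(\deg\phi)$ is large and then the kernel of the dual isogeny $\phi^\vee$ itself furnishes a large $L$-rational cyclic $\ell$-power subgroup of $E'$; in both cases one applies a uniform Arai-type level bound (Theorem~\ref{TheoremStrongArai}, Proposition~\ref{DegreeDivFromCyclicIsogeny}) over $\Q(j')$ to force $\ell^{n_0}\mid [F:\Q]$. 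For \textbf{P2} the key input is \cite[Corollary~4.3]{BN}: if $E$ and $E'$ are $L$-isogenous and $E(L)[\ell]^*\neq\emptyset$, then $E'(M)[\ell]^*\neq\emptyset$ for some $M/L$ of degree dividing $\ell$ --- independently of $\deg\phi$. Combined with the \textbf{P2} constant already known for $\cE_{F_0}$ \cite{Gen22}, this yields $\ell-1\mid c[F:\Q]$ for $\ell\gg_{F_0}0$. The upshot is that the isogeny degree never needs to be controlled; only its $\ell$-part interacts with the argument, and that interaction is handled by the two-case dichotomy above.
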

The family $\cI_{F_0}$ is significantly larger than $\cE_{F_0}$, as we now explain. Given an algebraic number $j\in \overline{\Q}$ and any elliptic curve $E$ with $j$-invariant $j(E)=j$, the isogeny class of $E$ contains $j$-invariants $j'$ of arbitrarily large degree $[\Q(j'):\Q]$. In the case where $E$ has complex multiplication (henceforth abbreviated as CM) this follows from class field theory: given a CM elliptic curve $E$ whose geometric endomorphism ring is (isomorphic to) an imaginary quadratic order $\oo$, its $j$-invariant has degree $[\Q(j(E)):\Q]=\#\textrm{Pic}(\oo)$, where $\textrm{Pic}(\oo)$ is the class group of $\oo$. By a classic result of Heilbronn \cite{Hei34}, as the discriminant $\Delta(\oo)\rightarrow -\infty$ the class number $\#\textrm{Pic}(\oo)\rightarrow \infty$. Since any two CM elliptic curves will be geometrically isogenous if their geometric endomorphism rings have isomorphic fraction fields, one concludes that the isogeny class of any CM elliptic curve will have $j$-invariants of arbitrarily large degree.
On the other hand, given any non-CM elliptic curve $E$ defined over a number field $F_0$, Serre's open image theorem \cite[Th\'eor\`eme 2]{Ser72} tells us that for $\ell\gg_{E, F_0}0$ one has that the mod-$\ell$ Galois representation of $E$ is surjective. It follows then that for any order $\ell$ subgroup $C_\ell\subseteq E$, the field of definition of $C_\ell$ over $F_0$ has degree $\ell+1$. By \cite[Proposition 3.3]{Cla} the field of definition equals $F_0(j_\ell)$ where $j_\ell$ is the $j$-invariant of the quotient elliptic curve $E/C_\ell$. Thus the isogeny class of $E$ contains $j$-invariants of degree $\ell+1$ over $F_0$ for all primes $\ell\gg_{E,F_0} 0$.

The study of torsion subgroups from $\cI_{F_0}$ fits into a larger program of studying Galois representations which ``come from" elliptic curves over $F_0$. Let us define the family of \textit{$F_0$-curves},
\[
\cQ_{F_0}:=\lbrace E_{/F}:\forall \sigma\in G_{F_0}, ~E~\textrm{is geometrically isogenous to }E^\sigma\rbrace.
\]
Like the family $\cI_{F_0}$, the family $\cQ_{F_0}$ is closed under geometric isogeny. 
In fact, we have the containments
\[
\cE_{F_0}\subseteq \cI_{F_0}\subseteq \cQ_{F_0}.
\]
If $F_0=\Q$, then $\cQ_{F_0}=\cQ_\Q$ is the well-studied family of $\Q$-curves. By results of \cite{Rib92, KW09a, KW09b}, an elliptic curve is a $\Q$-curve iff it is a modular elliptic curve, i.e., is a quotient of the Jacobian $J_1(N)$ of the modular curve $X_1(N)$ for some $N\in\Z^+$. 
Furthermore, for any non-CM $\Q$-curve $E_{/F}$, if $[F:\Q]$ is odd then $E$ is isogenous over $F$ to an elliptic curve with $\Q$-rational $j$-invariant \cite[Theorem 2.7]{CN}. Additionally, this forces the prime divisors of $\#E(F)[\tors]$ to lie in the set $\lbrace 2,3,5,7,11,13\rbrace$, and in fact $\#E(F)[\tors]\leq 1441440\sqrt{35}\cdot \sqrt{[F:\Q]}$ -- these are consequences of recent work of Bourdon and Najman \cite[Proposition 4.1]{BN}. These results on torsion subgroups from $\cI_\Q$ are part of the motivation for studying the family $\cI_{F_0}$ in this paper. 

It is very interesting to ask whether $\cQ_{\Q}$ is typically bounded in torsion. Proving this using properties \textbf{P1} and \textbf{P2} (defined in Section \ref{SectionTBT}) requires at least 
knowledge of degrees of torsion points on \textit{central} $\Q$-curves (these are defined in e.g. \cite{CN}).
Alternatively, a certain uniform boundedness conjecture for rational non-cuspidal non-CM points on the full Atkin-Lehner quotient modular curves $X^*(N)$, as described by Elkies \cite[Section 3]{Elk04} (see also Ellenberg \cite[Conjecture 3]{Ell04}), would imply that $\cQ_\Q$ is typically bounded in torsion. But a proof of this boundedness conjecture currently seems out of reach. 	

\subsection{An additional result}
Similar to \cite[Theorem 2]{Gen22} we have an additional result under an extra hypothesis. For each integer $d_0\in\Z^+$, let us define the family
\[
\cI_{d_0}:=\lbrace E_{/F}:E~\textrm{is geometrically isogenous to some }E'~\textrm{with }[\Q(j(E')):\Q]=d_0\rbrace.
\]
A result of Larson and Vaintrob \cite[Theorem 1]{LV14} says that for any number field $F_0$ and for all primes $\ell\gg_{F_0} 0$, if an elliptic curve defined over $F_0$ has an $F_0$-rational isogeny of degree $\ell$, then the twelfth power of its isogeny character is either the twelfth power of an isogeny character from a CM elliptic curve, or the sixth power of the mod-$\ell$ cyclotomic character. In \cite[Theorem 7.9]{LV14} they give an upper bound on the implied constant $\ell\gg_{F_0} 0$ which depends on $F_0$. 
For $d_0\in \Z^+$, we let \textbf{LV}$(d_0)$ be the hypothesis that the implied constant $\ell\gg_{F_0} 0$ for \cite[Theorem 1]{LV14} can be chosen to be the same between any degree $d_0$ number field $F_0$. Then our proof of Theorem \ref{I_F is TBT} also proves the following.
\begin{theorem}\label{I_d is TBT}
For any integer $d_0\in \Z^+$, if $\textbf{\emph{LV}}(d_0)$ is true then the family $\cI_{d_0}$ is typically bounded in torsion.
\end{theorem}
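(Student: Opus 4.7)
The plan is to retrace the proof of Theorem \ref{I_F is TBT} with careful accounting of which constants depend on the particular field $F_0$ versus only on its degree. Any $E_{/F}\in \cI_{d_0}$ is geometrically isogenous to some $E'$ with $[\Q(j(E')):\Q]=d_0$; setting $F_0:=\Q(j(E'))$ we have $E_{/F}\in \cI_{F_0}$, so Theorem \ref{I_F is TBT} already provides a constant $B(\epsilon)$ for each such $F_0$ individually. What needs proving is that these constants can be taken uniformly as $F_0$ varies over all number fields of degree $d_0$.

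I would first dispose of the CM subfamily of $\cI_{d_0}$. Geometric isogenies preserve the property of having complex multiplication, so this subfamily is contained in the family of all CM elliptic curves over number fields, which is typically bounded in torsion by Bourdon-Clark-Pollack \cite[Theorem 1.1.i]{BCP17}. The resulting bound is absolute and introduces no $F_0$-dependence.

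For the non-CM subfamily, I would verify that the proof of Theorem \ref{I_F is TBT} reduces to establishing properties \textbf{P1} and \textbf{P2} from Section \ref{SectionTBT}. Property \textbf{P1} demands a uniform bound on the primes $\ell$ that can divide $\#E(F)[\tors]$ for curves in the family. The relevant input here is Larson-Vaintrob \cite[Theorem 1]{LV14}, which constrains the isogeny character of a non-CM elliptic curve over $F_0$ admitting an $F_0$-rational $\ell$-isogeny once $\ell$ exceeds a bound depending on $F_0$. Under the hypothesis \textbf{\emph{LV}}$(d_0)$, that bound can be chosen to depend only on $d_0$, which is precisely what is needed to produce a uniform \textbf{P1}. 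Property \textbf{P2}, a quantitative bound on degrees of torsion points, should then follow from the same Serre open image and modular curve rational point ingredients used for Theorem \ref{I_F is TBT}, whose implied constants are already uniform in the degree of the base field (Mazur, Kamienny, Merel, Parent).

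The main obstacle is the bookkeeping: confirming that no other step in the proof of Theorem \ref{I_F is TBT} smuggles in a constant that depends on the specific $F_0$ rather than just on $d_0$, so that \textbf{\emph{LV}}$(d_0)$ is truly the only ingredient required to upgrade. Once this accounting is completed, typical boundedness of $\cI_{d_0}$ follows by combining the Bourdon-Clark-Pollack bound on the CM part with the uniform \textbf{P1}-\textbf{P2} bound on the non-CM part, in close analogy with the passage from \cite[Theorem 1]{Gen22} to \cite[Theorem 2]{Gen22}.
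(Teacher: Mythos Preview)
Your high-level strategy matches the paper's: split off the CM subfamily via \cite{BCP17}, then for the non-CM part verify \textbf{P1} and \textbf{P2} with constants depending only on $d_0$, invoking \textbf{LV}$(d_0)$ at the unique step where the proof of Theorem~\ref{I_F is TBT} produced an $F_0$-specific constant from \cite{LV14}. This is exactly how the paper proceeds (Proposition~\ref{I_d satisfies P1} and Corollary~\ref{I_d satisfies P2 if LV(d) is true}).

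However, you have interchanged the roles and the content of \textbf{P1} and \textbf{P2}. Property \textbf{P1} is not ``a uniform bound on the primes $\ell$ that can divide $\#E(F)[\tors]$''; it says that for each fixed prime $\ell$ and each $n_0$, the presence of an $F$-rational point of order $\ell^n$ forces $\ell^{n_0}\mid [F:\Q]$ once $n$ is large enough. In the paper this is established for $\cI_{d_0}$ \emph{unconditionally}, using the uniform Arai level bound (Theorem~\ref{TheoremStrongArai}) and Proposition~\ref{DegreeDivFromCyclicIsogeny}; Larson--Vaintrob plays no role there. Conversely, \textbf{P2} is the statement $\ell-1\mid c[F:\Q]$ whenever $E(F)[\ell]^*\neq\emptyset$, and it is precisely in establishing \textbf{P2} that the exceptional set $S_{F_0}$ from \cite{LV14} enters (Case~5 of \cite[Theorem~4.3]{CMP18}); hypothesis \textbf{LV}$(d_0)$ is what makes that set depend only on $d_0$. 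The remaining constraints in the \textbf{P2} argument (the bound $\ell>72d_0-1$ and the ramification condition on imaginary quadratic orders with ring class field of degree $\leq d_0$) are already uniform in $d_0$, and the ingredients you name---Mazur, Kamienny, Merel, Parent---are not actually invoked. So your bookkeeping instinct is correct, but the answer to ``which step depends on the particular $F_0$'' is \textbf{P2}, not \textbf{P1}.
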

\subsection{Notations and conventions}
Once and for all, let us fix an algebraic closure $\overline{\Q}$ of $\Q$. Then for each number field $F$, we let $G_F:=\Gal(\overline{\Q}/F)$ be its absolute Galois group. Unless stated otherwise, 
all elliptic curves are defined over number fields.

Without qualification, an isogeny of elliptic curves is assumed to be a geometric isogeny, i.e., a $\overline{\Q}$-rational isogeny; as an adjective, ``geometric" will mean $\overline{\Q}$-rational. We will use $E_{/F}$ to denote an elliptic curve $E$ defined over a number field $F$.
For any integer $N\in\Z^+$, we will use $E(F)[N]^*$ to denote the set of $F$-rational points on $E$ of exact order $N$.

Finally, given a prime $\ell\in\Z^+$ we will use $v_\ell\colon \Q\rightarrow \Z\cup\lbrace \infty\rbrace$ to denote the usual $\ell$-adic valuation.
\section{Strong Uniform $\ell$-Adic Divisibilities for Fields of Definition of Cyclic $\ell$-Primary Isogenies}
Before we prove that $\cI_{F_0}$ and $\cI_{d_0}$ are typically bounded in torsion, let us develop some necessary theory for fields of definition of cyclic isogenies.
Let $F$ be a number field and $E_{/F}$ an elliptic curve. Then $E$ has no (geometric) CM iff there exists a prime $\ell\in\Z^+$ for which the $\ell$-adic representation $\rho_{E,\ell^\infty}(G_F)$ is open in $\GL_2(\Z_\ell)$ \cite[Theorem IV.2.2]{Ser98}. In such a case, $\rho_{E,\ell^\infty}(G_F)$ contains the kernel of the mod-$\ell^n$ reduction map $\GL_2(\Z_\ell)\rightarrow \GL_2(\Z/\ell^n\Z)$ for sufficiently large $n\in\Z^+$. Let us call the least such $n$ the \textit{level} of $\rho_{E,\ell^\infty}(G_F)$.

The following result is due to Cremona and Najman \cite{CN}.
\begin{proposition}\cite[Proposition 3.7]{CN}\label{CyclicIsogFieldDegreeIsEll}
If $E_{/F}$ is a non-CM elliptic curve defined over a number field and $\ell\in\Z^+$ is a prime for which the $\ell$-adic representation $\rho_{E,\ell^\infty}(G_F)$ has level $N$, then for all $n> N$ and for any cyclic subgroup $C\subseteq E(\overline{\Q})$ of order $\ell^{n}$ one has 
\[
[F(C):F(\ell C)]=\ell.
\]
\end{proposition}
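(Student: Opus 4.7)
The plan is to translate everything into a statement about indices of subgroups of $\GL_2(\Z/\ell^n\Z)$, then use the level hypothesis to realize a specific Galois element that witnesses the lower bound $[F(C):F(\ell C)]\geq \ell$. The matching upper bound will be essentially formal.

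First I would fix a $\Z/\ell^n\Z$-basis of $E[\ell^n]$ under which $C=\langle (1,0)\rangle$; then $\ell C=\langle (\ell,0)\rangle\subseteq E[\ell^{n-1}]$. Writing $G:=\rho_{E,\ell^n}(G_F)\subseteq \GL_2(\Z/\ell^n\Z)$ and letting $H(C)$, $H(\ell C)$ denote the stabilizers of $C$ and $\ell C$ in $G$, the Galois-theoretic reformulation is
\[
[F(C):F(\ell C)]=[H(\ell C):H(C)].
\]
A direct computation shows that for $\begin{pmatrix} a & b \\ c & d \end{pmatrix} \in \GL_2(\Z/\ell^n\Z)$, membership in the stabilizer of $C$ is equivalent to $c\equiv 0\pmod{\ell^n}$, while membership in the stabilizer of $\ell C$ is equivalent to the strictly weaker condition $c\equiv 0 \pmod{\ell^{n-1}}$. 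Consequently the stabilizer of $\ell C$ in all of $\GL_2(\Z/\ell^n\Z)$ contains the stabilizer of $C$ with index exactly $\ell$, and so $[H(\ell C):H(C)]\leq \ell$ automatically.

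The content of the proposition is the matching lower bound, and this is where the level hypothesis enters. Since $\rho_{E,\ell^\infty}(G_F)$ has level $N$, it contains the kernel of reduction $\GL_2(\Z_\ell)\to \GL_2(\Z/\ell^N\Z)$, so after reducing mod $\ell^n$ the image $G$ contains every element of $\GL_2(\Z/\ell^n\Z)$ congruent to $I$ modulo $\ell^N$. Because $n>N$ we have $\ell^{n-1}\equiv 0 \pmod{\ell^N}$, so the matrix
\[
g_0:=\begin{pmatrix} 1 & 0 \\ \ell^{n-1} & 1 \end{pmatrix}
\]
lies in $G$. This $g_0$ has lower-left entry divisible by $\ell^{n-1}$ but not by $\ell^n$, so $g_0\in H(\ell C)\ssm H(C)$, giving an element witnessing nontriviality of the quotient $H(\ell C)/H(C)$. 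Combined with the upper bound, this forces the index to equal $\ell$.

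The only step that takes any care is checking that the element $g_0$ produced by the level hypothesis genuinely lies in the mod-$\ell^n$ image (and not merely in some abstract matrix group), but this is exactly the content of the definition of level together with the compatibility of the reduction maps $\GL_2(\Z_\ell)\to \GL_2(\Z/\ell^m\Z)$ as $m$ varies. The non-CM assumption is needed only to guarantee that the level $N$ is finite; once it is, all $n>N$ are fair game, which is precisely the range of the proposition.
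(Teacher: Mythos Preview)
The paper does not supply its own proof of this proposition; it is quoted from Cremona--Najman \cite[Proposition 3.7]{CN} and used as a black box, so there is no in-paper argument to compare against.

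Your argument is sound in outline and nearly complete, but the final step has a small gap. Exhibiting a single element $g_0\in H(\ell C)\ssm H(C)$ shows only that $[H(\ell C):H(C)]>1$; since $H(C)$ need not be normal in $H(\ell C)$ (and $S_2$ is not normal in $S_1$), there is no group quotient of order $\ell$ to invoke, and the bound $[H(\ell C):H(C)]\leq \ell$ by itself does not force the index to be either $1$ or $\ell$. The fix is immediate with what you already have: $g_0$ has order $\ell$, and for $0\leq k\leq \ell-1$ the power $g_0^k$ has lower-left entry $k\ell^{n-1}$, so $g_0^i H(C)=g_0^j H(C)$ forces $(i-j)\ell^{n-1}\equiv 0\pmod{\ell^n}$, i.e.\ $i\equiv j\pmod{\ell}$. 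Thus $I,g_0,\ldots,g_0^{\ell-1}$ represent $\ell$ distinct left cosets of $H(C)$ in $H(\ell C)$, giving $[H(\ell C):H(C)]\geq \ell$ and hence equality. Equivalently, in orbit--stabilizer language, $\langle g_0\rangle\subseteq H(\ell C)$ already acts transitively on the $\ell$ cyclic order-$\ell^n$ subgroups $C'$ with $\ell C'=\ell C$.
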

The following proposition is a generalization of Proposition \ref{CyclicIsogFieldDegreeIsEll}, which we will prove in this section and apply in the next.
\begin{proposition}\label{DegreeDivFromCyclicIsogeny}
Fix an integer $d_0\in\Z^+$ and a prime $\ell\in\Z^+$. Then there exists an integer $A(d_0,\ell)\in\Z^+$ for which the following holds: for all integers $n\in\Z^+$ with $n\geq A(d_0,\ell)$, and for all non-CM elliptic curves $E_{/F}$ where $[F:\Q]=d_0$, one has for any cyclic subgroup $C\subseteq E(\overline{\Q})$ of order $\ell^n$  the equality
\[
[F(C):F(\ell^{n-A(d_0,\ell)}C)]=\ell^{n-A(d_0,\ell)}.
\]
In particular, one has the divisibility
\[
\ell^{n-A(d_0,\ell)}\mid [F(C):F].
\]
\end{proposition}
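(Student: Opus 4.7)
My plan is to derive the statement from Proposition \ref{CyclicIsogFieldDegreeIsEll} by first establishing a uniform upper bound $A(d_0,\ell)$ on the level $N(E)$ of $\rho_{E,\ell^\infty}(G_F)$ as $E_{/F}$ ranges over all non-CM elliptic curves with $[F:\Q]=d_0$. With such a bound in hand, the claimed equality follows by iterating Proposition \ref{CyclicIsogFieldDegreeIsEll} along the natural tower of intermediate fields, and the claimed divisibility follows immediately from the equality.

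For the iteration, given $n\geq A(d_0,\ell)$ and any cyclic subgroup $C\subseteq E(\overline{\Q})$ of order $\ell^n$, I would consider the chain $C\supseteq\ell C\supseteq\cdots\supseteq\ell^{n-A(d_0,\ell)}C$ in which each $\ell^j C$ is cyclic of order $\ell^{n-j}$. For each $j\in\{0,1,\ldots,n-A(d_0,\ell)-1\}$, the group $\ell^j C$ has order $\ell^{n-j}$ with $n-j>A(d_0,\ell)\geq N(E)$, so Proposition \ref{CyclicIsogFieldDegreeIsEll} applied to $\ell^j C$ yields $[F(\ell^j C):F(\ell^{j+1}C)]=\ell$. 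Multiplying these degrees along the tower $F(\ell^{n-A(d_0,\ell)}C)\subseteq\cdots\subseteq F(C)$ gives the claimed equality, and the divisibility $\ell^{n-A(d_0,\ell)}\mid[F(C):F]$ follows since $[F(\ell^{n-A(d_0,\ell)}C):F]$ is a positive integer.

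The main obstacle is therefore the uniform level bound. For a single non-CM $E_{/F}$, Serre's open image theorem guarantees $N(E)<\infty$, but uniformity as $E$ varies over all non-CM curves defined over number fields of degree $d_0$ requires additional input. A natural moduli-theoretic approach is the following: for each $m\in\Z^+$, the locus of non-CM elliptic curves with level $\geq m$ consists of non-cuspidal, non-CM points on a finite collection of modular curves $X_H$, indexed by those subgroups $H\leq\GL_2(\Z/\ell^m\Z)$ that fail to contain the kernel of reduction modulo $\ell^{m-1}$. For $m$ sufficiently large these curves have geometric genus at least $2$, so Faltings' theorem yields finitely many degree $d_0$ points on each; the key technical task is to promote this finiteness to emptiness for $m\gg_{d_0,\ell}0$. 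One could approach this via a uniform open image theorem in the style of Cadoret--Tamagawa applied to the universal elliptic curve, combined with Merel's strong uniform boundedness theorem to control any low-level sporadic exceptions. I expect this uniform level bound to be the hardest step of the proof.
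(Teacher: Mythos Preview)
Your approach is essentially identical to the paper's: secure a uniform bound $A(d_0,\ell)$ on the $\ell$-adic level and then iterate Proposition~\ref{CyclicIsogFieldDegreeIsEll} along the tower $F(C)\supsetneq F(\ell C)\supsetneq\cdots\supsetneq F(\ell^{n-A}C)$ to obtain the degree equality and divisibility. The only difference is that the step you single out as the main obstacle---the uniform level bound---is precisely Theorem~\ref{TheoremStrongArai} in the paper, due to Arai \cite{Ara08} and sharpened by Clark--Pollack \cite{CP18}, which the paper simply quotes; no new moduli-theoretic argument via Faltings or Cadoret--Tamagawa is required.
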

Our strengthening uses the following uniformity result on levels of $\ell$-adic representations of non-CM elliptic curves, first proven by Arai \cite[Theorem 1.2]{Ara08} and later strengthened by Clark and Pollack \cite[Theorem 2.3.a]{CP18}. 
\begin{theorem}\cite[Theorem 2.3.a]{CP18}\label{TheoremStrongArai}
Fix an integer $d_0\in \Z^+$. Then for each prime $\ell\in\Z^+$ there exists an integer $A(d_0,\ell)\in\Z^+$ such that for all number fields $F$ with $[F:\Q]=  d_0$ and for all non-CM elliptic curves $E_{/F}$, the $\ell$-adic representation $\rho_{E,\ell^\infty}(G_F)$ has level at most $A(d_0,\ell)$.
\end{theorem}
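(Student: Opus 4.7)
The plan is to upgrade the non-uniform precursor of this statement (which, following Arai, fixes the number field $F$ and bounds the level only in terms of $(F,\ell)$) by replacing its finiteness input from Faltings's theorem with the \emph{uniform open image theorem} of Cadoret and Tamagawa. Arai's argument runs as follows: for each proper open subgroup $H\leq \GL_2(\Z_\ell)$, there is a modular curve $X_H$ whose $F$-rational non-cuspidal non-CM points parametrise, up to twist, the non-CM $E_{/F}$ with $\rho_{E,\ell^\infty}(G_F)\subseteq H$. Once the level of $H$ is sufficiently large, $X_H$ has genus at least $2$, so $X_H(F)$ is finite by Faltings; since only finitely many $H$ of each given level exist, only finitely many non-CM $E_{/F}$ have level exceeding any fixed $n$, and one extracts the bound $N(F,\ell)$ by taking the maximum over this finite exceptional set.

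To make the bound depend only on $d_0=[F:\Q]$ and $\ell$, the plan is to invoke Cadoret-Tamagawa in the following form: for a smooth geometrically connected $\Q$-variety $S$ equipped with an abelian scheme $\mathcal{A}\to S$ one geometric fibre of which has open $\ell$-adic image, and for any $d\in\Z^+$, there exists a uniform integer $N(S,\mathcal{A},\ell,d)$ such that for every $s\in S(K)$ with $[K:\Q]\leq d$, the level of $\rho_{\mathcal{A}_s,\ell^\infty}(G_K)$ is at most $N(S,\mathcal{A},\ell,d)$. Applied to the universal elliptic curve over a fine moduli cover of $Y(1)_\Q$, this immediately yields a candidate $A(d_0,\ell)$, since every non-CM $E_{/F}$ with $[F:\Q]=d_0$ corresponds (up to twist) to an $F$-point of this cover.

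The main obstacle is the coarse-to-fine passage: $Y(1)$ is only a coarse moduli space, so one must work instead over a fine moduli model, say $Y(N_0)_\Q$ for an auxiliary $N_0\geq 3$ coprime to $\ell$. A non-CM $E_{/F}$ of degree $d_0$ need not lift to an $F$-point of $Y(N_0)$, but it always lifts over an extension $F'/F$ of degree bounded by $|\GL_2(\Z/N_0\Z)|$, a quantity depending only on $N_0$. Applying Cadoret-Tamagawa to $Y(N_0)$ at the slightly larger degree $d_0\cdot |\GL_2(\Z/N_0\Z)|$ and then descending back to $F$ --- using that the level of $\rho_{E,\ell^\infty}(G_F)$ and that of $\rho_{E,\ell^\infty}(G_{F'})$ differ in a controlled way governed by the $\ell$-part of $[F':F]$, and that quadratic twists preserve the level for odd $\ell$ (with a bounded adjustment when $\ell=2$) --- transfers the uniform bound to $Y(1)$ at degree $d_0$, producing the desired $A(d_0,\ell)$.
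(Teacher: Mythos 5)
This statement is quoted verbatim from Clark--Pollack \cite[Theorem 2.3.a]{CP18}; the paper contains no proof of it, so there is nothing internal to compare against. Your route --- upgrading Arai's fixed-field argument by feeding the universal elliptic curve over a fine modular curve into the Cadoret--Tamagawa uniform open image theorem --- is in fact the route taken in \cite{CP18}, so the strategy is sound. A few imprecisions are worth flagging. First, the Cadoret--Tamagawa theorem is only known for one-dimensional bases $S$; your statement for an arbitrary smooth geometrically connected $\Q$-variety is not a theorem, though the application to a modular curve only needs the curve case. Second, their conclusion is not that \emph{every} point of degree at most $d$ has bounded level: there is a finite exceptional set of such points at which the fibral image fails to be open in the generic image, and the uniform index bound holds only off this set. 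As written your version of the theorem is false, since CM fibres have infinite level; you must observe, via Serre's open image theorem, that every exceptional point is a CM point and hence is excluded by the non-CM hypothesis. Third, the full level-$N_0$ curve $Y(N_0)$ is not geometrically connected over $\Q$, so either work over $\Q(\zeta_{N_0})$ or use $Y_1(N_0)$ with $N_0\geq 4$. Finally, the descent from $F'$ back to $F$ is immediate: $\rho_{E,\ell^\infty}(G_{F'})\subseteq \rho_{E,\ell^\infty}(G_F)$, so the level over $F$ is at most the level over $F'$, and since $E_{F'}$ equipped with its level structure is an honest $F'$-point of the fine moduli space there is no twisting ambiguity; the $\ell$-part bookkeeping and the $\ell=2$ caveat in your last step are unnecessary.
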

\begin{proof}[Proof of Proposition \ref{DegreeDivFromCyclicIsogeny}]
Suppose that $E_{/F}$ is a non-CM elliptic curve with $[F:\Q]= d_0$, and suppose that $\ell\in\Z^+$ is a prime for which $\rho_{E,\ell^\infty}(G_F)$ has level $N$; then by Theorem \ref{TheoremStrongArai} we have $N\leq A(d_0, \ell)$.

Fix $n>N$ and a cyclic subgroup $C\subseteq E(\overline{\Q})$ of order $\ell^n$. Then Proposition \ref{CyclicIsogFieldDegreeIsEll} implies that for each $0\leq k<n-N$, the cyclic $\ell^{n-k}$-isogeny $\ell^{k}C\subseteq E(\overline{\Q})$ is such that
\[
[F(\ell^{k}C):F(\ell^{k+1}C)]=\ell.
\]
Since $N\leq A(d_0,\ell)$, from the tower of degree $\ell$ extensions
\[
F(C)\supsetneq F(\ell C)\supsetneq  \ldots \supsetneq  F(\ell^{n-N-1}C)\supsetneq  F(\ell^{n-N}C)
\]
our results immediately follow.
\end{proof}
\section{Typically bounding torsion on $\cI_{F_0}$ and $\cI_{d_0}$}\label{SectionTBT}
To begin with, let us recall from \cite{CMP18} properties \textbf{P1} and \textbf{P2} which apply to certain families $\cF$ of elliptic curves.
\begin{enumerate}[start=1,label={\bfseries P1:}]
\item Given integers $\ell,n_0\in\Z^+$ with $\ell$ prime, there exists $n:=n(\cF, \ell,n_0)\in\Z^+$ such that for all $E_{/F}\in \cF$, if $E(F)[\ell^n]^*\neq\emptyset$ then 
\[
\ell^{n_0}\mid [F:\Q].
\]
\end{enumerate} 
\begin{enumerate}[start=1,label={\bfseries P2:}]
\item There exists $c:=c(\cF)\in \Z^+$ such that for all primes $\ell\in\Z^+$ and all $E_{/F}\in \cF$, if $E(F)[\ell]^*\neq \emptyset$ then 
\[
\ell-1\mid c[F:\Q].
\]
\end{enumerate}
\begin{remark}\label{RemarkSufficientlyLargeP2}
For a family $\cF$, if for some constants $c,\ell_0\in\Z^+$ we show that for all $E_{/F}\in \cF$ and for all primes $\ell\geq \ell_0$ with $E(F)[\ell]^*\neq\emptyset$ one has $\ell-1\mid c[F:\Q]$,
then it follows that $\cF$ satisfies \textbf{P2} with the constant $c(\cF):=c\cdot \prod_{\ell\leq \ell_0} (\ell-1)$. In particular, to prove that $\cF$ satisfies \textbf{P2,} we only need to show that $\ell-1\mid c[F:\Q]$ holds for sufficiently large primes $\ell$ with respect to $\cF$.
\end{remark}
The utility of property \textbf{P1} and \textbf{P2} is in the following theorem.
\begin{theorem}\cite[Theorem 3.2]{CP18}
If a family $\cF$ satisfies \textbf{P1} and \textbf{P2,} then $\cF$ is typically bounded in torsion.
\end{theorem}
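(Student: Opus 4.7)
The plan is to show that for every $\epsilon > 0$ I can produce a threshold $B(\epsilon) > 0$ such that the bad degree set $\mathcal{S}(\cF, B(\epsilon))$ has upper density at most $\epsilon$. My approach is a dichotomy on the prime factorization of $T := \#E(F)[\tors]$: I will fix a prime cutoff $M = M(\epsilon)$ and write $T = T_{\leq M} \cdot T_{>M}$, where $T_{\leq M} := \prod_{\ell \leq M} \ell^{v_\ell(T)}$ and $T_{>M} := \prod_{\ell > M} \ell^{v_\ell(T)}$, then bound each factor using \textbf{P1} and \textbf{P2} respectively, keeping each ``bad'' density contribution below $\epsilon/2$.

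For the small-prime factor I will first choose an exponent $n_0 = n_0(\epsilon)$ so large that $\sum_{\ell \leq M} \ell^{-n_0} \leq \epsilon/2$. Then \textbf{P1} furnishes, for each prime $\ell \leq M$, a constant $N_\ell := n(\cF, \ell, n_0)$ with the property that $v_\ell(T) \geq N_\ell$ forces $\ell^{n_0} \mid [F:\Q]$. Off the exceptional set
\[
D_1 := \bigcup_{\ell \leq M} \{d \in \Z^+ : \ell^{n_0} \mid d\},
\]
which satisfies $\overline{\delta}(D_1) \leq \sum_{\ell \leq M} \ell^{-n_0} \leq \epsilon/2$ by subadditivity, one thus obtains the universal bound $T_{\leq M} \leq C_1 := \prod_{\ell \leq M} \ell^{N_\ell - 1}$.

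For the large-prime factor, \textbf{P2} guarantees that every prime $\ell > M$ dividing $T_{>M}$ satisfies $\ell - 1 \mid c[F:\Q]$, and applying \textbf{P1} individually at each prime $\ell > M$ with $n_0 = 1$ pins the multiplicity $v_\ell(T)$ except on the thin set $\{d : \ell \mid d\}$. I will then argue that, for $M$ chosen sufficiently large in terms of $\epsilon$, the density of $d$ admitting some $E_{/F} \in \cF$ of degree $d$ with $T_{>M} \geq B_0$ is at most $\epsilon/2$ (for a suitable threshold $B_0$). Setting $B(\epsilon) := B_0 \cdot C_1$ and taking the union bound over the two exceptional sets then closes the argument.

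The step I expect to be the main obstacle is the density estimate for the large-prime case. The naive union bound $\sum_{\ell > M} c/(\ell - 1)$ over \textbf{P2}-admissible primes already diverges, so \textbf{P2} cannot be used in isolation. The remedy is to combine \textbf{P1} and \textbf{P2}: use \textbf{P1} with $n_0 = 1$ uniformly in $\ell$ to cap each $v_\ell(T)$ whenever $\ell \nmid d$, and then feed in a divisor-function-style estimate showing that for density $1 - \epsilon/2$ of all $d$ the integer $cd$ has only few divisors of the form $\ell - 1$ with $\ell$ a prime exceeding $M$. Making this coupling quantitative is the technical heart of \cite[Theorem 3.2]{CP18}.
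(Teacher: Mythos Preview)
The paper does not prove this theorem: it is quoted from \cite[Theorem 3.2]{CP18}, and the only commentary is the remark that \textbf{P2} ``is related to a theorem of Erd\H{o}s and Wagstaff \cite[Theorem 2]{EW80},'' together with a pointer to the proof of \cite[Theorem 3.2]{CMP18} for context. So there is no argument in the present paper to compare yours against beyond that hint, which your final paragraph does pick up on.

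That said, your outline has one structural wrinkle worth flagging. In the argument of \cite{CMP18, CP18}, the cutoff $M$ is not fixed first and then patched afterwards for large primes; rather, one first applies the Erd\H{o}s--Wagstaff result (to the integers $cd$) to produce, for the given $\epsilon$, an $M = M(\epsilon, c)$ such that the set of $d$ admitting \emph{any} prime $\ell > M$ with $\ell - 1 \mid cd$ already has upper density at most $\epsilon/2$. Combined with \textbf{P2}, this forces $T_{>M} = 1$ outright for $d$ outside that exceptional set, and \textbf{P1} is then applied only to the finitely many primes $\ell \leq M$, exactly as in your small-prime step. Your version instead fixes $M$ first and tries to control $T_{>M}$ by capping each $v_\ell(T)$ via \textbf{P1} with $n_0 = 1$; but the resulting exponent bound $n(\cF,\ell,1)$ depends on $\ell$, so knowing merely that $cd$ has ``only few divisors of the form $\ell - 1$'' is not enough to bound $T_{>M}$ uniformly --- you need the primes themselves to be bounded, which is precisely what the Erd\H{o}s--Wagstaff input delivers and which renders the separate large-prime \textbf{P1} step unnecessary. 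Since you explicitly defer the quantitative details to \cite{CP18}, this is more a matter of ordering the exposition than a fatal gap, but as written your large-prime step would not close on its own.
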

As a note, property \textbf{P1} is based off the fact that over number fields of a fixed degree, degrees of $\ell$-primary torsion points on elliptic curves uniformly tend to infinity as the torsion point orders get larger. Property \textbf{P2} is related to a theorem of Erd\H{o}s and Wagstaff \cite[Theorem 2]{EW80}. For more context, see the proof of \cite[Theorem 3.2]{CMP18}.

In this section, we will show that both $\cI_{F_0}$ and $\cI_{d_0}$ satisfy \textbf{P1}, and that $\cI_{F_0}$ satisfies \textbf{P2} but $\cI_{d_0}$ only does conditionally. Let us note that a finite union of families which are typically bounded in torsion will also be typically bounded in torsion. Therefore, since the family of CM elliptic curves is typically bounded in torsion \cite[Theorem 1.1.i]{BCP17}, we will assume hereafter that all of our elliptic curves are non-CM.

\subsection{$\cI_{F_0}$ and $\cI_{d_0}$ satisfy P1}
\begin{proposition}\label{I_d satisfies P1}
For each $d_0\in\Z^+$, the family $\cI_{d_0}$ satisfies \textbf{P1.}
\end{proposition}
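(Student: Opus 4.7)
The plan is to reduce the problem to Proposition \ref{DegreeDivFromCyclicIsogeny} applied not to $E/F$ itself (whose base field has no a priori degree bound) but to a geometrically isogenous elliptic curve modeled over a number field of degree $d_0$. Let $E_{/F} \in \cI_{d_0}$ be non-CM and $P \in E(F)[\ell^n]^*$, so that $C := \langle P \rangle$ is an $F$-rational cyclic subgroup of order $\ell^n$. By hypothesis, fix a geometric isogeny $\phi : E \to E'$ with $[\Q(j(E')):\Q] = d_0$; setting $F_0 := \Q(j(E'))$ and choosing a fixed model $\tilde{E}/F_0$ with $j(\tilde{E}) = j(E')$, I post-compose $\phi$ with a geometric isomorphism $E' \to \tilde{E}$ to regard $\phi : E \to \tilde{E}$ as defined over some finite extension $L \supseteq F \cdot F_0$. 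Write $\deg \phi = \ell^a m$ with $\gcd(m, \ell) = 1$.

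Because $C$ is cyclic of $\ell$-power order and $|\ker \phi \cap E[\ell^\infty]| = \ell^a$, the image $\phi(C) \subseteq \tilde{E}$ is a cyclic subgroup of order $\ell^k$ with $k \geq n - a$; moreover $\phi(C)$ is rational over $F \cdot L$. Applying Proposition \ref{DegreeDivFromCyclicIsogeny} to the non-CM curve $\tilde{E}/F_0$ (using $[F_0:\Q] = d_0$) with the cyclic subgroup $\phi(C)$ yields, for $k \geq A(d_0, \ell)$, the divisibility $\ell^{k - A(d_0,\ell)} \mid [F_0(\phi(C)) : F_0]$, and hence $\ell^{k - A(d_0,\ell)} \mid [FL : F_0]$. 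An analysis of the field $L$---which is generated over $F \cdot F_0$ by the field of definition of $\ker \phi \subseteq E[\deg \phi]$ together with at most one quadratic extension trivializing the geometric twist $E/\ker \phi \cong \tilde{E}$---yields $v_\ell([FL : F_0]) \leq v_\ell([F : \Q]) + O(a) + O_{d_0}(1)$, and combining the two estimates gives
\[
v_\ell([F : \Q]) \;\geq\; n - a - A(d_0, \ell) - O(a) - O_{d_0}(1).
\]

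The principal obstacle is that the $\ell$-adic valuation $a = v_\ell(\deg \phi)$ is not a priori uniformly controlled as $E$ varies in $\cI_{d_0}$, so the displayed inequality does not immediately deliver \textbf{P1}. My plan to address this is to prove a companion lemma stating that for $E_{/F} \in \cI_{d_0}$ non-CM, the isogeny $\phi$ realizing $E \in \cI_{d_0}$ can always be chosen with $v_\ell(\deg \phi) \leq v_\ell([F : \Q]) + C(d_0, \ell)$ for a constant depending only on $d_0$ and $\ell$. Heuristically, $v_\ell([\Q(j(E)) : \Q])$ essentially measures the distance in the $\ell$-isogeny graph from $E$ to the nearest curve with $j$-invariant of degree at most $d_0$, while $[\Q(j(E)) : \Q] \leq [F : \Q]$. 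Substituting $a \leq v_\ell([F : \Q]) + C$ into the displayed estimate gives $v_\ell([F : \Q]) \geq (n - O_{d_0,\ell}(1))/c$ for some fixed $c > 0$, so choosing $n$ to be a suitable linear function of $n_0$ (plus a constant depending on $d_0$ and $\ell$) forces $v_\ell([F : \Q]) \geq n_0$, which is the content of \textbf{P1}.
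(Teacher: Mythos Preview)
Your proposal has the right overall shape—reduce to Proposition \ref{DegreeDivFromCyclicIsogeny} applied to a model $\tilde E/F_0$—but it contains a genuine gap at the step where you analyze the field $L$ over which the isogeny $\phi$ is defined. You build $L$ as $F\cdot F_0$ adjoined the field of definition of $\ker\phi$ (plus a quadratic twist), and this is what produces your $O(a)$ term in $v_\ell([FL:F_0])$. That term is exactly what forces you to invoke an unproven ``companion lemma'' bounding $a=v_\ell(\deg\phi)$ in terms of $v_\ell([F:\Q])$.

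The paper avoids this entirely by citing \cite[Proposition 3.3]{Cla}: any two geometrically isogenous elliptic curves over a number field $K$ are already isogenous over an at-most-quadratic extension of $K$. Applied with $K=F(j')$, this gives an $L$-rational cyclic isogeny $\phi:E\to E'$ with $[L:F(j')]\le 2$, \emph{independently of} $\deg\phi$. Hence $[L:\Q(j')]\mid 2(d_0)!\,[F:\Q]$, and your $O(a)$ term simply is not there. With this in hand the paper disposes of the dependence on $a$ by a two-case split: if $n-a$ is large one applies Proposition \ref{DegreeDivFromCyclicIsogeny} to $\langle\phi(P)\rangle\subseteq E'$; if instead $a$ is large one applies it to the $\ell$-primary part of $\ker\phi^\vee$, which is an $L$-rational cyclic $\ell^{a}$-subgroup of $E'$. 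Either way one obtains $\ell^{n_0+v_\ell(d_0!)+1}\mid[L:\Q(j')]\mid 2(d_0)!\,[F:\Q]$, hence $\ell^{n_0}\mid[F:\Q]$, with $n:=2(A(d_0,\ell)+n_0+v_\ell(d_0!))+1$.

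Note that your companion lemma is in fact true, but its natural proof is precisely the second case above: once $[L:F(j')]\le 2$ is known, Proposition \ref{DegreeDivFromCyclicIsogeny} applied to the $\ell$-part of $\ker\phi^\vee$ yields $a-A(d_0,\ell)\le v_\ell(2(d_0)!\,[F:\Q])$. So your proposed detour both requires the input you are missing and is subsumed by the direct argument. The heuristic you give (distance in the $\ell$-isogeny graph controls $[\Q(j):\Q]$) is morally correct, but making it rigorous again comes down to controlling fields of definition of cyclic kernels, i.e., exactly the combination of \cite[Proposition 3.3]{Cla} and Proposition \ref{DegreeDivFromCyclicIsogeny}.
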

\begin{proof}
Fix an integer $n_0\in\Z^+$ and a prime $\ell\in\Z^+$. 
Let us take $A:=A(d_0,\ell)$ to be the ``strong uniform $\ell$-adic Arai level" from Theorem \ref{TheoremStrongArai}.
Let us set $n:=2(A+n_0+v_\ell(d_0!))+1$; we will show that \textbf{P1} holds for $\cI_{d_0}$ with this integer $n$.

Let $E_{/F}$ be a non-CM elliptic curve isogenous to an elliptic curve with degree $d_0$ $j$-invariant, denoted $j'$. Taking a quadratic twist if necessary, by \cite[Proposition 3.3]{Cla} we have that $E$ is $L$-rationally isogenous to an elliptic curve $E'$ defined over $\Q(j')$ with $j(E')=j'$,
where $L/F(j')$ is at most a quadratic extension. 
Let us write this $L$-rational isogeny as $\phi:E\rightarrow E'$; we may assume that $\phi$ is cyclic.

Let us assume that $E(F)[\ell^n]^*\neq\emptyset$; choose any point $P\in E(F)[\ell^n]^*$. Let us set $k:=v_\ell(|\phi(P)|)$, so that the order $|\phi(P)|=\ell^k$ where $k\geq \max\lbrace n-v_\ell(\deg \phi),0\rbrace$. Then $\phi(P)$ generates an $L$-rational cyclic $\ell^k$-subgroup $C'\subseteq E'(\overline{\Q})$. We consider two cases.
\begin{enumerate}[1.]
\item If $n-v_\ell(\deg \phi )\geq A+n_0+v_\ell(d_0!)+1$: then necessarily $n-v_\ell(\deg\phi)\geq 0$. We get that $k\geq A+n_0+v_\ell(d_0!)+1$, so by applying Proposition \ref{DegreeDivFromCyclicIsogeny} to the $L$-rational cyclic $\ell^{k}$-subgroup $C'$, we have
\[
\ell^{n_0+v_\ell(d_0!)+1}\mid [\Q(j')(C' ):\Q(j')]\mid [L:\Q(j')]\mid 2(d_0)![F:\Q].
\]
\item If $n-v_\ell(\deg\phi)< A+n_0+v_\ell(d_0!)+1$: then $v_\ell(\deg\phi)>n-1-(A+n_0+v_\ell(d_0!))=A+n_0+v_\ell(d_0!)$. The subgroup $C'':=(\deg\phi/\ell^{v_\ell(\deg\phi)})\cdot \ker\phi^\vee$ is an $L$-rational cyclic $\ell^{v_\ell(\deg\phi)}$-subgroup of $E'$,\footnote{Here, $\phi^\vee:E'\rightarrow E$ is the dual isogeny of $\phi:E\rightarrow E'$, see \cite[Chapter III.6]{Sil09}. The dual isogeny is also cyclic, and has the same degree and field of definition as $\phi$.} so because $v_\ell(\deg\phi)\geq A+n_0+v_\ell(d_0!)+1$, Proposition \ref{DegreeDivFromCyclicIsogeny} implies that
\[
\ell^{n_0+v_\ell(d_0!)+1}\mid [\Q(j')(C''):\Q(j')]\mid [L:\Q(j')]\mid 2(d_0)![F:\Q].
\]
\end{enumerate}
We thus find in both cases that $\ell^{n_0}\mid [F:\Q]$, whence we conclude that $\cI_{d_0}$ satisfies \textbf{P1.} 
\end{proof}

\begin{corollary}\label{I_F satisfies P1}
For each number field $F_0$, the family $\cI_{F_0}$ satisfies \textbf{P1.}
\end{corollary}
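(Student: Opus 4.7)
The plan is to reduce immediately to Proposition \ref{I_d satisfies P1} via a finite-union argument. The key observation is that if $E_{/F} \in \cI_{F_0}$, witnessed by a geometric isogeny to some $E'$ with $j(E') \in F_0$, then $\Q(j(E')) \subseteq F_0$, and so $d_0 := [\Q(j(E')):\Q]$ is a positive divisor of $[F_0:\Q]$. This immediately yields the containment
\[
\cI_{F_0} \;\subseteq\; \bigcup_{d_0 \,\mid\, [F_0:\Q]} \cI_{d_0},
\]
which is a \emph{finite} union of families.

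Next, I would verify that property \textbf{P1} is preserved under finite unions, which is an easy exercise: given $\ell$ and $n_0$, for each divisor $d_0$ of $[F_0:\Q]$ Proposition \ref{I_d satisfies P1} supplies an integer $n(d_0) := n(\cI_{d_0},\ell,n_0)$ witnessing \textbf{P1} for $\cI_{d_0}$. Setting
\[
n \;:=\; \max_{d_0 \,\mid\, [F_0:\Q]} n(d_0),
\]
which exists because the index set is finite, one checks that $n$ witnesses \textbf{P1} for $\cI_{F_0}$: any $E_{/F} \in \cI_{F_0}$ with $E(F)[\ell^n]^* \neq \emptyset$ lies in some $\cI_{d_0}$ with $d_0 \mid [F_0:\Q]$, and since $n \geq n(d_0)$ we still have $E(F)[\ell^{n(d_0)}]^* \neq \emptyset$, forcing $\ell^{n_0} \mid [F:\Q]$.

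There is no real obstacle here; the entire content of the corollary is the (trivial) observation that having $j$-invariant in the fixed number field $F_0$ forces the degree of that $j$-invariant over $\Q$ to lie in the finite set of divisors of $[F_0:\Q]$, after which the previous proposition does all the work. I would present it as a two-line deduction rather than a standalone proof.
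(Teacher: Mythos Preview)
Your proof is correct and essentially identical to the paper's: the paper uses the containment $\cI_{F_0}\subseteq \bigcup_{1\leq d\leq [F_0:\Q]}\cI_d$ and the fact that \textbf{P1} is closed under finite unions, while you use the slightly sharper (but equivalent for this purpose) union over divisors of $[F_0:\Q]$ and spell out the finite-union argument explicitly. There is nothing to add.
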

\begin{proof}
Since $\cI_{d_0}$ satisfies \textbf{P1} for all $d_0\in\Z^+$ by Proposition \ref{I_d satisfies P1}, from the containment 
\[
\cI_{F_0}\subseteq \bigcup_{1\leq d\leq [F_0:\Q]}\cI_d
\]
it immediately follows that $\cI_{F_0}$ also satisfies \textbf{P1}, since this property is  closed under finite unions of families.
\end{proof}
\subsection{$\cI_{F_0}$ satisfies \textbf{P2}, as does $\cI_{d_0}$ conditionally}
Throughout the following, we let $d_0$ denote the degree of $F_0$.
\begin{proposition}\label{I_F satisfies P2}
For each number field $F_0$, the family $\cI_{F_0}$ satisfies \textbf{P2.}
\end{proposition}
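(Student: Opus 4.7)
The plan is to reduce \textbf{P2} for $\cI_{F_0}$ to \textbf{P2} for $\cE_{F_0}$ (unconditionally proven in \cite[Theorem 1]{Gen22}) by transferring the rational torsion data from $E$ to the geometrically isogenous target $E'$, which has $j$-invariant in $F_0$. Setting $d_0 := [F_0:\Q]$, fix a non-CM $E_{/F} \in \cI_{F_0}$ with $P \in E(F)[\ell]^*$. By \cite[Proposition 3.3]{Cla} and, if necessary, a quadratic twist of $E'$, we may choose a cyclic $L$-rational isogeny $\phi \colon E \to E'$ with $E'$ defined over $\Q(j(E'))$ and $[L : F \cdot \Q(j(E'))] \leq 2$; hence $[L:\Q] \leq 2d_0[F:\Q]$.

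Since $\ell$ is prime, $\phi(P)$ either has order $\ell$ or is trivial. If $\phi(P)\neq 0$, then $\phi(P) \in E'(L)[\ell]^*$; since $E'_{/L} \in \cE_{F_0}$, \cite[Theorem 1]{Gen22} yields a constant $c_0 = c_0(F_0)$ with
\[
\ell - 1 \,\bigm|\, c_0 [L:\Q] \leq 2 d_0 c_0 [F:\Q].
\]
If instead $\phi(P) = 0$, then $\ell \mid \deg\phi$, and the dual isogeny $\phi^\vee \colon E' \to E$ is $L$-rational and cyclic of the same degree as $\phi$. Its cyclic kernel contains a unique order-$\ell$ subgroup $C' \subset E'(\overline{\Q})$, which is automatically $L$-rational. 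Thus $E'_{/L}$ has an $L$-rational cyclic $\ell$-isogeny, though possibly no $L$-rational $\ell$-torsion point.

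The main obstacle is this second case, whose hypothesis is weaker than what \cite[Theorem 1]{Gen22} demands verbatim. I expect the resolution to re-run the proof of \cite[Theorem 1]{Gen22}: its engine, the Larson--Vaintrob theorem \cite[Theorem 1]{LV14}, already takes only a cyclic $\ell$-isogeny as input (rather than a rational $\ell$-torsion point). Concretely, one invokes \cite[Theorem 1]{LV14} over the \emph{fixed} base $F_0$ on a suitably descended model of $E'$ and tracks the associated mod-$\ell$ isogeny character restricted to $G_L$, extracting $\ell - 1 \mid c'(F_0)[L:\Q]$ for $\ell \gg_{F_0} 0$. Combining both cases and absorbing the small primes via Remark \ref{RemarkSufficientlyLargeP2} then yields \textbf{P2} for $\cI_{F_0}$ with a constant $c(\cI_{F_0})$ proportional to $d_0 \cdot \max(c_0(F_0), c'(F_0))$.
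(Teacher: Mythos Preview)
Your Case~1 is fine (though the inequality $[L:\Q]\le 2d_0[F:\Q]$ must be the divisibility $[L:\Q]\mid 2d_0[F:\Q]$, which does hold since $[F(j'):F]\mid[\Q(j'):\Q]\mid d_0$). The real problem is Case~2, which you yourself flag as the ``main obstacle'' and then treat only heuristically. Your proposed fix does not work as stated: the Larson--Vaintrob theorem \cite[Theorem~1]{LV14} produces a finite exceptional set $S_{K}$ of primes that depends on the base field $K$ over which the $\ell$-isogeny is rational. Your subgroup $C'\subset E'$ is only $L$-rational, not $\Q(j')$- or $F_0$-rational, so invoking Larson--Vaintrob would give a bound $\ell\gg_L 0$ that varies with $F$, destroying the uniformity needed for \textbf{P2}. ``Descending'' $C'$ to $F_0$ is not generally possible, and restricting the isogeny character to $G_L$ does not help for the same reason. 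Adjoining a generator of $C'$ does not help either: that extension has degree dividing $\ell-1$, which gives a vacuous divisibility after applying the $\cE_{F_0}$ result.

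The paper avoids this case split entirely. It cites \cite[Corollary~4.3]{BN}: given the $L$-rational isogeny $\phi\colon E\to E'$ and $E(L)[\ell]^*\neq\emptyset$, there exists an extension $M/L$ with $[M:L]\mid \ell$ such that $E'(M)[\ell]^*\neq\emptyset$. Now $E'_{/M}\in\cE_{F_0}$ has an honest $M$-rational $\ell$-torsion point, so the unconditional \textbf{P2} for $\cE_{F_0}$ from \cite{Gen22} gives $\ell-1\mid 1728[M:\Q]$. Since $[M:\Q]\mid 2\ell(d_0)!\,[F:\Q]$ and $\gcd(\ell-1,\ell)=1$, one strips the factor $\ell$ to obtain $\ell-1\mid 3456(d_0)!\,[F:\Q]$. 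The key point you are missing is that the passage from an $L$-rational $\ell$-isogeny on $E'$ to a rational $\ell$-torsion point can be done over an extension of degree dividing $\ell$ (not $\ell-1$), and that factor is coprime to $\ell-1$.
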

\begin{proof}
We must construct a constant $c:=c(\cI_{F_0})\in \Z^+$ such that for any non-CM elliptic curve $E_{/F}$ isogenous to an elliptic curve with $F_0$-rational $j$-invariant, if $\ell\in\Z^+$ is a prime such that $E(F)[\ell]^*\neq\emptyset$, then one has 
\begin{equation}\label{P2Div}
\ell-1\mid c[F:\Q].
\end{equation}

Before doing this, we will explain the constant $c(\cE_{F_0})$ that we get in \cite[Theorem 1]{Gen22}, since it is closely related to the constant $c(\cI_{F_0})$ we will construct. As shown in the proof that the family 
\[
\cE_{F_0}:=\lbrace E_{/F}: j(E)\in F_0\rbrace
\] 
conditionally satisfies \textbf{P2} \cite[Theorem 4.3]{CMP18}, for any non-CM elliptic curve $E_{/F_0}$ and for any prime $\ell\in\Z^+$, at least one of the following holds: one has $\ell-1\mid 2[F_0(R):\Q]$ for all $R\in E[\ell]^*$, or else $\ell\leq 15d_0+1$ or $E$ has an $F_0$-rational $\ell$-isogeny; these correspond to Case 1-3, Case 4 and Case 5 of the proof of \cite[Theorem 4.3]{CMP18}, respectively. For Case 1-3 the authors can take $c:=2$ in \eqref{P2Div}, and by Remark \ref{RemarkSufficientlyLargeP2} they can exclude Case 4. For Case 5, their hypotheses imply that $\ell$ is contained in a \textit{finite} set of primes $S_{F_0}$ from \cite[Theorem 1]{LV14}, which they then exclude by Remark \ref{RemarkSufficientlyLargeP2}. Then \cite[Theorem 1]{Gen22} makes Case 5 unconditional: if $\ell \not\in S_{F_0}$, as well as $\ell>72d_0-1$ and $\ell$ is unramified in any imaginary quadratic order whose whose ring class field has degree at most $d_0$, then one has $\ell-1\mid 864[F_0(R):\Q]$ for all $R\in E[\ell]^*$. By Remark \ref{RemarkSufficientlyLargeP2}, this proves that $\cE_{F_0}$ satisfies \textbf{P2} -- an arbitrary non-CM $E_{/F}\in \cE_{F_0}$, while not necessarily defined over $F_0$, will be isomorphic to an elliptic curve $E'_{/F_0}$ over a quadratic extension of $F$. Note that working up to quadratic twist implies one must take $c(\cE_{F_0}):=2\cdot 864=1728$ for $\ell\gg_{F_0}0$.

For the rest of this proof, let us take $\ell\gg_{F_0}0$ to mean that $\ell\not\in S_{F_0}$, $\ell>72d_0-1$ and that $\ell$ is unramified in all imaginary quadratic orders $\oo$ whose ring class field $K(\oo)$ satisfies $[K(\oo):\Q]\leq d_0$. Then as noted above, we have
for all non-CM elliptic curves $E_{/F}\in \cE_{F_0}$ and for all primes $\ell\gg_{F_0} 0$ that $\ell-1\mid 1728[F:\Q]$ whenever $E(F)[\ell]^*\neq\emptyset$.

Let $E_{/F}\in \cI_{F_0}$ be a non-CM elliptic curve. Then by assumption, $E$ is isogenous to an elliptic curve with $F_0$-rational $j$-invariant, denoted $j'$. Arguing as in the last subsection, by \cite[Proposition 3.3]{Cla} there exists both an elliptic curve $E'$ defined over $\Q(j')$ with $j(E')=j'$ and an $L$-rational isogeny $\phi:E\rightarrow E'$, where $L/F(j')$ is at most a quadratic extension. 

Fix a prime $\ell\gg_{F_0}0$, and suppose that $E(F)[\ell]^*\neq\emptyset$. Then by \cite[Corollary 4.3]{BN} there exists an extension $M/L$ of degree dividing $\ell$ for which $E'(M)[\ell]^*\neq\emptyset$. Since $E'_{/M}\in \cE_{F_0}$, by our previous discussion on $c(\cE_{F_0})$ it follows that
\[
\ell-1\mid 1728[M:\Q].
\] 
We check that
\[
[M:\Q]\mid 2\ell[F(j'):\Q]=2\ell[F(j'):F]\cdot [F:\Q]\mid 2\ell(d_0)!\cdot [F:\Q].
\]
Since $\gcd(\ell-1,\ell)=1$, we deduce that
\[
\ell-1\mid 3456(d_0)!\cdot [F:\Q].
\]
Thus the constant $c:=3456(d_0)!$ is such that \eqref{P2Div} holds for all non-CM elliptic curves $E_{/F}\in \cI_{F_0}$ when $\ell\gg_{F_0} 0$. We conclude by Remark \ref{RemarkSufficientlyLargeP2} that $\cI_{F_0}$ satisfies \textbf{P2.}
\end{proof}
For our final result, recall that \textbf{LV}$(d_0)$ is the assumption that the set $S_{F_0}$ from \cite[Theorem 1]{LV14} can be chosen to be the same between any degree $d_0$ number field $F_0$. 
\begin{corollary}\label{I_d satisfies P2 if LV(d) is true}
For any integer $d_0\in \Z^+$, if $\textbf{\emph{LV}}(d_0)$ is true then the family $\cI_{d_0}$ satisfies \textbf{P2.}
\end{corollary}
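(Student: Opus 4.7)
The plan is to re-run the proof of Proposition \ref{I_F satisfies P2} essentially verbatim, observing that the only step in which the constant $c(\cE_{F_0})$ genuinely depends on $F_0$ (rather than on $d_0 = [F_0 : \Q]$ alone) is the invocation of the exceptional set $S_{F_0}$ from \cite[Theorem 1]{LV14}; the hypothesis \textbf{LV}$(d_0)$ precisely eliminates this dependence.

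Concretely, let $E_{/F} \in \cI_{d_0}$ be non-CM, so $E$ is isogenous to some $E'$ with $[\Q(j(E')) : \Q] = d_0$. Set $F_0 := \Q(j(E'))$, noting that $F_0$ may vary with $E$. As in the proof of Proposition \ref{I_F satisfies P2}, after possibly taking a quadratic twist \cite[Proposition 3.3]{Cla} produces an elliptic curve $E'_{/F_0}$ with $j(E') = j'$ and an $L$-rational isogeny $\phi : E \rightarrow E'$ with $[L : F(j')] \leq 2$. For a prime $\ell$, declare $\ell \gg_{d_0} 0$ to mean that $\ell > 72 d_0 - 1$, that $\ell$ is unramified in every imaginary quadratic order whose ring class field has degree at most $d_0$, and crucially that $\ell \notin S_{d_0}$, where $S_{d_0}$ is the uniform exceptional set guaranteed by \textbf{LV}$(d_0)$.

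Assuming $\ell \gg_{d_0} 0$ and $E(F)[\ell]^* \neq \emptyset$, by \cite[Corollary 4.3]{BN} there is an extension $M/L$ of degree dividing $\ell$ with $E'(M)[\ell]^* \neq \emptyset$. Since $E'_{/M} \in \cE_{F_0}$ and the constants appearing in Cases 1--5 of the proof of \cite[Theorem 4.3]{CMP18}/\cite[Theorem 1]{Gen22} now depend only on $d_0$ (with Case 5 handled uniformly by $S_{d_0}$), one obtains $\ell - 1 \mid 1728 [M : \Q]$. The divisibility $[M : \Q] \mid 2 \ell (d_0)! [F : \Q]$ together with $\gcd(\ell - 1, \ell) = 1$ then yields $\ell - 1 \mid 3456 (d_0)! [F : \Q]$, so Remark \ref{RemarkSufficientlyLargeP2} completes the proof.

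The main (and essentially only) conceptual obstacle is verifying that \textbf{LV}$(d_0)$ really does make every $F_0$-dependent constant in the argument degree-$d_0$ uniform. The threshold $72 d_0 - 1$ and the factor $(d_0)!$ depend only on $d_0$, as does the condition on unramified primes in class fields of imaginary quadratic orders of bounded class field degree. The set $S_{F_0}$ was the single place where genuine $F_0$-dependence entered, and \textbf{LV}$(d_0)$ is designed exactly to replace it by a uniform $S_{d_0}$; no further ingredient beyond what is used in Proposition \ref{I_F satisfies P2} is needed.
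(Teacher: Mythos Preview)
Your proposal is correct and follows essentially the same approach as the paper's own proof: both reduce to the argument of Proposition \ref{I_F satisfies P2} by setting $F_0:=\Q(j')$ for the isogenous curve of degree $d_0$ $j$-invariant, and then observe that among the conditions defining $\ell\gg_{F_0}0$ only the set $S_{F_0}$ genuinely depends on $F_0$, so that \textbf{LV}$(d_0)$ yields a uniform threshold $\ell\gg_{d_0}0$ and the constant $3456(d_0)!$ works via Remark \ref{RemarkSufficientlyLargeP2}. The only difference is presentational: you re-run the mechanics of the isogeny and extension $M/L$ explicitly, whereas the paper simply cites back to the already-established divisibility \eqref{EqP2DivisibilityExplicit}.
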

\begin{proof}
Our proof of Proposition \ref{I_F satisfies P2}
showed that for any number field $F_0$, for sufficiently large primes $\ell\gg_{F_0}0$ and for all non-CM $E_{/F}\in \cI_{F_0}$, if $E(F)[\ell]^*\neq\emptyset$ then 
\begin{equation}\label{EqP2DivisibilityExplicit}
\ell-1\mid 3456(d_0)!\cdot [F:\Q]
\end{equation}
where $d_0:=[F_0:\Q]$.
Our implied constant $\ell\gg_{F_0}0$ was such that $\ell\not\in S_{F_0}$, $\ell>72d_0-1$ and $\ell$ is unramified in any imaginary quadratic order whose whose ring class field has degree at most $d_0$.
Since we are assuming that \textbf{LV}$(d_0)$ is true, this implied constant $\ell\gg_{F_0} 0$ can be chosen to be the same between any number field of degree $d_0$; let us write this as $\ell\gg_{d_0} 0$. In particular, for any non-CM $E_{/F}\in \cI_{d_0}$, fixing a degree $d_0$ $j$-invariant $j'$ which is isogenous to $E$, we have $E_{/F}\in \cI_{F_0}$ where $F_0:=\Q(j')$, and so our proof of Proposition \ref{I_F satisfies P2} shows that \eqref{EqP2DivisibilityExplicit} holds when $\ell\gg_{d_0} 0$ and $E(F)[\ell]^*\neq\emptyset$. Since $E_{/F}\in \cI_{d_0}$ was arbitrary and the implied constant $\ell\gg_{d_0}0$ depends only on $d_0$, we conclude by Remark \ref{RemarkSufficientlyLargeP2} that $\cI_{d_0}$ satisfies \textbf{P2.}
\end{proof}
\subsection{Acknowledgments}
The author thanks Pete L. Clark for helpful comments on earlier drafts of this paper. The author also thanks the referees for their comments and suggestions.

\end{document}